\documentclass{article}
\usepackage{amsmath,amssymb,amsthm}
\usepackage{geometry}
\newtheorem{definition}{Definition}
\newtheorem{theorem}{Theorem}
\newtheorem{lemma}{Lemma}

\newtheorem{corollary}{Corollary}
\newtheorem{remark}{Remark}

\title{Spectral Collapse Under Geometric Alignment of Extreme Events}
\author{J. Petkevicius}
\date{}

\begin{document}

\maketitle

\begin{abstract}
We study the spectral structure of increment matrices \(Q_n = (dY_i dY_j)\) for high-dimensional stochastic systems. Extreme events introduce a low-rank perturbation \(J_n = \sum \theta_k v_k v_k^\top\) into the quadratic covariation matrix. We prove that when the jump directions are geometrically aligned in a weighted sense — meaning most of the total jump magnitude comes from directions close to some common vector — the jump matrix itself undergoes spectral concentration: \(\lambda_1(J_n)/\operatorname{tr}(J_n) \to 1\). Combined with asymptotic negligibility of the background diffusion, this implies spectral collapse of the full system: \(\lambda_1(Q_n)/\operatorname{tr}(Q_n) \to 1\) and \(\operatorname{erank}(Q_n) \to 1\). Conversely, we prove that spectral collapse forces both geometric alignment and background negligibility, giving a complete characterization. We extend these results to the stochastic setting, proving almost sure collapse under natural probabilistic assumptions on the jump process. This framework unifies previous work on determinant growth, coupling diagnostics, and ensemble behavior.
\end{abstract}

\section{Motivation}

Consider a system of \(n\) variables

\[
Y(t) = (Y_1(t), Y_2(t), \dots, Y_n(t)).
\]

At small time scales, the system evolves through increments

\[
(dY_1, dY_2, \dots, dY_n).
\]

Instead of studying each variable separately, we study how all variables move together. To capture this, we construct the matrix of all pairwise products of increments.

In normal conditions, movement is distributed across many directions. Variables fluctuate independently or with weak correlations. The resulting matrix has eigenvalues spread across a range.

However, during extreme events, many variables move together in aligned directions. This creates a strong structured component in the matrix. When these aligned events dominate the total variation, the spectral behavior collapses: a single eigenvalue captures essentially all the system's activity.

The main structural observation is that extreme events introduce a low-rank perturbation. When the directions of these events are geometrically aligned, the perturbation itself becomes spectrally concentrated. When the background noise is asymptotically negligible, this concentration propagates to the full system, causing complete spectral collapse. The converse also holds: collapse forces alignment and negligibility.

\section{Definitions}

We define all objects explicitly.

\begin{definition}[Increment Matrix Density]
Let \(Y(t) = (Y_1(t), \dots, Y_n(t))\) be a semimartingale. Define the predictable quadratic covariation matrix

\[
Q_n(t) = \left( \frac{d[Y_i, Y_j]_t}{dt} \right)_{1 \le i,j \le n},
\]

where \([Y_i, Y_j]_t\) denotes the quadratic covariation of \(Y_i\) and \(Y_j\). 

Throughout this paper, we work with the density form. For notational convenience, we write the informal shorthand

\[
Q_n = (dY_i dY_j)_{1 \le i,j \le n}
\]

to mean the same object.

The matrix \(Q_n(t)\) is symmetric and positive semidefinite for all \(t\).
\end{definition}

Explicitly,

\[
Q_n =
\begin{pmatrix}
dY_1^2 & dY_1 dY_2 & \cdots & dY_1 dY_n \\
dY_2 dY_1 & dY_2^2 & \cdots & dY_2 dY_n \\
\vdots & \vdots & \ddots & \vdots \\
dY_n dY_1 & dY_n dY_2 & \cdots & dY_n^2
\end{pmatrix}.
\]

\begin{definition}[Decomposition]
We decompose the increment matrix into two parts:

\[
Q_n = B_n + J_n,
\]

where:
\begin{itemize}
\item \(B_n\) represents normal, diffuse variation (diffusion)
\item \(J_n\) represents extreme events (jumps)
\end{itemize}
\end{definition}

\begin{definition}[Jump Matrix]
Let there be \(m_n\) extreme events, each represented by a jump vector \(v_{n,k} \in \mathbb{R}^n\) with magnitude \(\theta_{n,k} \ge 0\). Define

\[
J_n = \sum_{k=1}^{m_n} \theta_{n,k} v_{n,k} v_{n,k}^\top, \quad \|v_{n,k}\| = 1.
\]

Each term \(\theta_{n,k} v_{n,k} v_{n,k}^\top\) is a rank-1 matrix. The sum \(J_n\) has rank at most \(m_n\).
\end{definition}

\begin{definition}[Eigenvalues]
Let

\[
\lambda_1(Q_n) \ge \lambda_2(Q_n) \ge \cdots \ge \lambda_n(Q_n) \ge 0
\]

be the eigenvalues of \(Q_n\). The largest eigenvalue \(\lambda_1(Q_n)\) represents the dominant direction of movement.
\end{definition}

\begin{definition}[Effective Rank]
Define the effective rank

\[
\operatorname{erank}(Q_n) = \frac{(\operatorname{tr} Q_n)^2}{\operatorname{tr}(Q_n^2)}.
\]

This quantity satisfies \(1 \le \operatorname{erank}(Q_n) \le n\). When \(\operatorname{erank}(Q_n) = 1\), the matrix has rank 1. When \(\operatorname{erank}(Q_n) = n\), the eigenvalues are all equal.
\end{definition}

\begin{definition}[Spectral Weight]
Define the observable

\[
W_n = e^{-\lambda_1(Q_n)}.
\]

This weight becomes small when the dominant eigenvalue is large, indicating that the system is controlled by a single direction.
\end{definition}

\section{Stochastic Model}

We now write the full specification of \(Q_n\). The process \(Y(t)\) is a semimartingale with decomposition

\[
dY_t = \mu_t dt + \sigma_t dW_t + dJ_t,
\]

where:
\begin{itemize}
\item \(\mu_t\) is the drift (locally bounded)
\item \(\sigma_t\) is the diffusion coefficient
\item \(dW_t\) is Brownian motion
\item \(dJ_t\) is a pure jump process
\end{itemize}

\subsection{Pathwise Form}

For a single path, the quadratic covariation matrix satisfies

\[
Q_n(t) = \sigma_t \sigma_t^\top + \sum_{s \le t} (\Delta Y_s)(\Delta Y_s)^\top,
\]

where the sum is over all jump times.

\subsection{Mean Form}

Taking expectations,

\[
\mathbb{E}[Q_n(t)] = \underbrace{\mathbb{E}[\sigma_t \sigma_t^\top]}_{B_n(t)} + \underbrace{\mathbb{E}\left[\sum_{s \le t} (\Delta Y_s)(\Delta Y_s)^\top\right]}_{J_n(t)},
\]

where \(J_n(t)\) represents the expected cumulative contribution of jumps.

\subsection{Deterministic Model for Analysis}

From this point forward, we analyze the deterministic matrix

\[
Q_n = B_n + J_n, \quad J_n = \sum_{k=1}^{m_n} \theta_{n,k} v_{n,k} v_{n,k}^\top,
\]

where:
\begin{itemize}
\item \(B_n\) is a symmetric positive semidefinite matrix representing the normal regime
\item \(v_{n,k} \in \mathbb{R}^n\) with \(\|v_{n,k}\| = 1\) represent the directions of extreme events
\item \(\theta_{n,k} \ge 0\) represent the magnitudes of events
\end{itemize}

This deterministic model represents either a single realized path or the mean effect of multiple jumps. It captures the essential structure: a low-rank perturbation (the jump matrix) added to a background matrix.

\section{Preliminary Lemmas}

We establish basic facts about the jump matrix and its eigenvalues.

\begin{lemma}[Trace and Leading Eigenvalue]
For any symmetric positive semidefinite matrix \(J_n\),

\[
\lambda_1(J_n) \le \operatorname{tr}(J_n) \le n \lambda_1(J_n).
\]

\end{lemma}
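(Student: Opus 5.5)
The plan is to use the spectral theorem for the symmetric matrix \(J_n\) and then compare the trace with its largest eigenvalue term by term. Since \(J_n\) is real symmetric, it is orthogonally diagonalizable, so \(J_n = U \Lambda U^\top\) with \(U\) orthogonal and \(\Lambda = \operatorname{diag}(\lambda_1(J_n), \dots, \lambda_n(J_n))\). By cyclicity of the trace, \(\operatorname{tr}(J_n) = \operatorname{tr}(\Lambda U^\top U) = \operatorname{tr}(\Lambda) = \sum_{i=1}^n \lambda_i(J_n)\). Positive semidefiniteness gives \(\lambda_i(J_n) = u_i^\top J_n u_i \ge 0\) for each unit eigenvector \(u_i\). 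Hence all eigenvalues lie in \([0, \lambda_1(J_n)]\) with the ordering of the Eigenvalues definition.

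For the lower bound, drop the nonnegative terms \(\lambda_2, \dots, \lambda_n\) from the sum:
\[
\operatorname{tr}(J_n) = \lambda_1(J_n) + \sum_{i=2}^n \lambda_i(J_n) \ge \lambda_1(J_n).
\]
For the upper bound, bound each summand by the largest one:
\[
\operatorname{tr}(J_n) = \sum_{i=1}^n \lambda_i(J_n) \le n\,\lambda_1(J_n).
\]

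No step is a real obstacle. The only point needing care is that the left inequality relies on positive semidefiniteness: for an indefinite symmetric matrix, negative eigenvalues could make the trace smaller than \(\lambda_1\). That is why I would state explicitly that \(\lambda_i \ge 0\). For the jump matrix of the paper this is automatic, since \(x^\top J_n x = \sum_k \theta_{n,k} (v_{n,k}^\top x)^2 \ge 0\) because \(\theta_{n,k} \ge 0\).

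As a remark, the argument gives \(\operatorname{tr}(J_n) \le \operatorname{rank}(J_n)\,\lambda_1(J_n) \le m_n \lambda_1(J_n)\), since only nonzero eigenvalues contribute. This sharper form is likely the useful one later in the paper.
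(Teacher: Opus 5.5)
Your proof is correct and is essentially the paper's argument: you write the trace as the sum of the eigenvalues, which are nonnegative by positive semidefiniteness, drop $\lambda_2,\dots,\lambda_n$ for the lower bound, and bound each eigenvalue by $\lambda_1$ for the upper bound. Your extra remark $\operatorname{tr}(J_n) \le \operatorname{rank}(J_n)\,\lambda_1(J_n) \le m_n\lambda_1(J_n)$ is also valid, though the paper never uses it later.
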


\begin{proof}
Since all eigenvalues are nonnegative,
\[
\operatorname{tr}(J_n) = \sum_{k=1}^n \lambda_k(J_n) \ge \lambda_1(J_n).
\]
Also, \(\lambda_k(J_n) \le \lambda_1(J_n)\) for all \(k\), so
\[
\operatorname{tr}(J_n) \le n \lambda_1(J_n).
\]
\end{proof}

\begin{lemma}[Rank-1 Decomposition]
Let \(J_n = \sum_{k=1}^{m_n} \theta_{n,k} v_{n,k} v_{n,k}^\top\) with \(\|v_{n,k}\| = 1\) and \(\theta_{n,k} \ge 0\). Then

\[
\operatorname{tr}(J_n) = \sum_{k=1}^{m_n} \theta_{n,k}.
\]

\end{lemma}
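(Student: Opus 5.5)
The plan is to use linearity of the trace together with the elementary identity \(\operatorname{tr}(vv^\top) = v^\top v = \|v\|^2\) for any vector \(v \in \mathbb{R}^n\). Since \(J_n\) is written as a finite sum of rank-1 matrices, its trace is the sum of the traces of the summands, and each summand's trace can be computed directly.

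First, I will verify the identity for a single term. For \(v = v_{n,k}\), the diagonal entries of \(vv^\top\) are \(v_i^2\), so
\[
\operatorname{tr}(v_{n,k} v_{n,k}^\top) = \sum_{i=1}^n v_{n,k,i}^2 = \|v_{n,k}\|^2 = 1,
\]
using the normalization \(\|v_{n,k}\| = 1\) from the Jump Matrix definition. Equivalently, one can invoke cyclicity of the trace: \(\operatorname{tr}(vv^\top) = \operatorname{tr}(v^\top v)\).

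Second, I will apply linearity of the trace to the finite sum:
\[
\operatorname{tr}(J_n) = \sum_{k=1}^{m_n} \theta_{n,k} \operatorname{tr}(v_{n,k} v_{n,k}^\top) = \sum_{k=1}^{m_n} \theta_{n,k}.
\]

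There is no real obstacle here. The only point needing care is that the unit-norm assumption on each \(v_{n,k}\) is what turns each trace into \(1\); without it, the result would instead read \(\sum_k \theta_{n,k}\|v_{n,k}\|^2\). The nonnegativity \(\theta_{n,k} \ge 0\) is not needed for the identity itself. It only ensures that \(J_n\) is positive semidefinite, so that the previous Lemma on trace and leading eigenvalue applies, giving \(\lambda_1(J_n) \le \sum_k \theta_{n,k}\).
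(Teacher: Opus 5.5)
Your proof is correct and follows the same approach as the paper: compute \(\operatorname{tr}(\theta v v^\top) = \theta\|v\|^2 = \theta\) for each rank-1 term, then sum using linearity of the trace. Your side remark is also accurate: the identity does not need \(\theta_{n,k} \ge 0\), which is used only to make \(J_n\) positive semidefinite.
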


\begin{proof}
For each rank-1 term, \(\operatorname{tr}(\theta v v^\top) = \theta \|v\|^2 = \theta\). By linearity of trace,
\[
\operatorname{tr}(J_n) = \sum_{k=1}^{m_n} \theta_{n,k}.
\]
\end{proof}

\begin{lemma}[Quadratic Form Bound]
For any symmetric \(B_n\) and any unit vector \(v\),

\[
|v^\top B_n v| \le \|B_n\|_{\mathrm{op}}.
\]

\end{lemma}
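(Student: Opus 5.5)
The bound follows from the definition of the operator norm together with the Cauchy--Schwarz inequality. For a symmetric matrix \(B_n\), the operator norm is \(\|B_n\|_{\mathrm{op}} = \sup_{\|x\|=1} \|B_n x\|\). Applied to a unit vector \(v\), the definition gives \(\|B_n v\| \le \|B_n\|_{\mathrm{op}}\). I will then write the quadratic form as an inner product, \(v^\top B_n v = \langle v, B_n v\rangle\), and apply Cauchy--Schwarz:
\[
|v^\top B_n v| \le \|v\|\,\|B_n v\| \le 1 \cdot \|B_n\|_{\mathrm{op}}.
\]
This already proves the claim, and the argument does not use symmetry at all.

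As a cross-check, and because it matches the spectral language used elsewhere in the paper, I would also give a spectral version. By the spectral theorem, write \(B_n = \sum_i \mu_i u_i u_i^\top\) with \(\{u_i\}\) an orthonormal basis. Then
\[
v^\top B_n v = \sum_i \mu_i \langle u_i, v\rangle^2 .
\]
The weights \(\langle u_i, v\rangle^2\) are nonnegative and sum to \(\|v\|^2 = 1\), so \(v^\top B_n v\) is a convex combination of the eigenvalues \(\mu_i\). Hence its absolute value is at most \(\max_i |\mu_i|\), which equals \(\|B_n\|_{\mathrm{op}}\) for a symmetric matrix. In the case of interest, \(B_n\) is positive semidefinite, so this gives \(0 \le v^\top B_n v \le \lambda_1(B_n)\).

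There is no real obstacle here. The only point to handle carefully is the identification \(\|B_n\|_{\mathrm{op}} = \max_i |\mu_i|\) for symmetric matrices. The Cauchy--Schwarz route avoids this step entirely, so I would present that argument as the main proof.
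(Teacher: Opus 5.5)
Your proof is correct, and it takes a slightly different route from the paper. The paper's proof is one line: it asserts \(\|B_n\|_{\mathrm{op}} = \max_{\|x\|=1} |x^\top B_n x|\) ``by definition'' and reads off the bound. That formula is really the numerical radius. It agrees with the induced norm \(\sup_{\|x\|=1}\|B_n x\|\) only because \(B_n\) is symmetric; for the nilpotent block \(\begin{pmatrix} 0 & 1 \\ 0 & 0 \end{pmatrix}\) the first quantity is \(1/2\) and the second is \(1\). So the paper's one-liner quietly relies on the identification you flagged, and that identification is proved by exactly the spectral decomposition argument in your cross-check.

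Your main argument starts from the standard induced-norm definition and applies Cauchy--Schwarz. It is self-contained and shows the inequality holds for every real square matrix, symmetric or not. The paper's version is shorter but depends on the symmetric-case characterization of the operator norm. Your spectral cross-check also gives the sharper positive semidefinite statement \(0 \le v^\top B_n v \le \lambda_1(B_n)\), which is all the paper actually needs later. In Step 1 of Theorem 2, the lower bound \(u_n^\top B_n u_n \ge -\|B_n\|_{\mathrm{op}}\) could simply be replaced by \(u_n^\top B_n u_n \ge 0\).
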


\begin{proof}
By definition of operator norm,
\[
\|B_n\|_{\mathrm{op}} = \max_{\|x\|=1} |x^\top B_n x| \ge |v^\top B_n v|.
\]
\end{proof}

\begin{lemma}[Weyl's Inequality]
For symmetric matrices \(A\) and \(E\),

\[
\lambda_k(A) + \lambda_n(E) \le \lambda_k(A+E) \le \lambda_k(A) + \lambda_1(E).
\]

In particular, when \(E\) is positive semidefinite,
\[
\lambda_k(A) \le \lambda_k(A+E) \le \lambda_k(A) + \lambda_1(E).
\]

\end{lemma}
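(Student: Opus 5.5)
We prove Weyl's inequality through the Courant--Fischer min-max characterization of eigenvalues. For a symmetric \(n\times n\) matrix \(M\) with eigenvalues in decreasing order,
\[
\lambda_k(M) = \max_{\dim S = k}\ \min_{x\in S,\ \|x\|=1} x^\top M x .
\]
We take this as the standard variational principle. It follows from the spectral theorem by intersecting a \(k\)-dimensional subspace with the span of the eigenvectors for \(\lambda_k,\dots,\lambda_n\), which has dimension \(n-k+1\) and so meets it nontrivially.

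\emph{Upper bound.} For every unit vector \(x\) we have \(x^\top E x \le \lambda_1(E)\), which is the Rayleigh quotient bound in the spirit of Lemma 3. Hence
\[
x^\top (A+E) x \le x^\top A x + \lambda_1(E).
\]
Fix any \(k\)-dimensional subspace \(S\) and minimize both sides over unit vectors \(x\in S\):
\[
\min_{x\in S} x^\top(A+E)x \le \min_{x\in S} x^\top A x + \lambda_1(E).
\]
Maximizing over \(S\) and applying Courant--Fischer to both \(A+E\) and \(A\) gives \(\lambda_k(A+E)\le \lambda_k(A)+\lambda_1(E)\).

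\emph{Lower bound.} In the same way, \(x^\top E x \ge \lambda_n(E)\) for every unit \(x\), so
\[
x^\top(A+E)x \ge x^\top A x + \lambda_n(E).
\]
Taking the minimum over unit \(x\in S\) and then the maximum over \(S\) gives \(\lambda_k(A+E)\ge \lambda_k(A)+\lambda_n(E)\). This bound also follows from the upper bound applied to the pair \((A+E,\,-E)\), because \(\lambda_1(-E)=-\lambda_n(E)\).

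\emph{PSD case.} If \(E\succeq 0\), then \(\lambda_n(E)\ge 0\), and the lower bound reduces to \(\lambda_k(A)\le\lambda_k(A+E)\). The upper bound is unchanged.

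The only step that needs care is justifying the min-max principle itself. Once it is in hand, the rest is monotonicity of min and max under a uniform additive shift of the quadratic form.
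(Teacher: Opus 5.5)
Your proof is correct and takes essentially the paper's approach: the paper gives no argument and cites Horn and Johnson, and your Courant--Fischer derivation is the standard textbook proof behind that citation (shift the Rayleigh quotient uniformly by \(\lambda_1(E)\) or \(\lambda_n(E)\), then note the shift survives the min over unit \(x\in S\) and the max over \(\dim S = k\)). Your sketch of the min--max principle only covers the direction \(\lambda_k \ge \max\min\), via subspace intersection; the reverse direction is the trivial choice of \(S\) as the span of the top \(k\) eigenvectors, and since you take Courant--Fischer as known this is not a gap.
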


\begin{proof}
See Horn and Johnson, Matrix Analysis, Corollary 4.3.12.
\end{proof}

\section{Geometric Conditions for Spectral Concentration}

Before analyzing the full system, we establish when the jump matrix itself exhibits spectral concentration. This is the core geometric mechanism.

\begin{theorem}[Geometric Alignment Implies Spectral Concentration]
Let

\[
J_n = \sum_{k=1}^{m_n} \theta_{n,k} v_{n,k} v_{n,k}^\top, \quad \|v_{n,k}\| = 1, \quad \theta_{n,k} \ge 0.
\]

Assume there exists a unit vector \(u_n \in \mathbb{R}^n\) such that:

\begin{enumerate}
\item \textbf{Weighted alignment:}
\[
\sum_{k=1}^{m_n} \theta_{n,k} \left(1 - \langle v_{n,k}, u_n \rangle^2 \right) = o\left(\sum_{k=1}^{m_n} \theta_{n,k}\right) \quad \text{as } n \to \infty.
\]
\end{enumerate}

Then

\[
\frac{\lambda_1(J_n)}{\operatorname{tr}(J_n)} \to 1 \quad \text{as } n \to \infty.
\]

\end{theorem}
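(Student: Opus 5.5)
The plan is to sandwich \(\lambda_1(J_n)\) between \(\operatorname{tr}(J_n)\) and a Rayleigh quotient evaluated at the alignment vector \(u_n\). Write \(T_n = \sum_{k=1}^{m_n}\theta_{n,k}\). By the Rank-1 Decomposition lemma, \(T_n = \operatorname{tr}(J_n)\). We may assume \(T_n > 0\); otherwise the ratio is undefined, and the alignment hypothesis implicitly requires \(T_n>0\) for large \(n\).

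For the upper bound, Lemma (Trace and Leading Eigenvalue) gives \(\lambda_1(J_n) \le \operatorname{tr}(J_n)\), because \(J_n\) is a nonnegative combination of rank-1 projections and hence positive semidefinite. So the ratio is at most \(1\).

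For the lower bound, use the variational (Courant--Fischer) characterization \(\lambda_1(J_n) = \max_{\|x\|=1} x^\top J_n x\) and test it at \(x = u_n\):
\[
\lambda_1(J_n) \ge u_n^\top J_n u_n = \sum_{k=1}^{m_n} \theta_{n,k}\langle v_{n,k},u_n\rangle^2 = T_n - \sum_{k=1}^{m_n}\theta_{n,k}\bigl(1-\langle v_{n,k},u_n\rangle^2\bigr).
\]
Dividing by \(T_n\) gives
\[
1 \ge \frac{\lambda_1(J_n)}{\operatorname{tr}(J_n)} \ge 1 - \frac{1}{T_n}\sum_{k=1}^{m_n}\theta_{n,k}\bigl(1-\langle v_{n,k},u_n\rangle^2\bigr).
\]
By the weighted alignment hypothesis, the last term tends to \(0\). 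The squeeze theorem then gives the conclusion.

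There is no genuine obstacle here. The only points to handle are justifying the Rayleigh-quotient lower bound for the top eigenvalue of a symmetric matrix (a standard fact, of the same nature as Lemma (Quadratic Form Bound)) and noting that \(T_n>0\), so that dividing by the trace is legitimate.
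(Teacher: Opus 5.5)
Your proposal is correct and follows the paper's proof: both test the variational characterization at \(u_n\), rewrite \(u_n^\top J_n u_n\) as \(\operatorname{tr}(J_n)\) minus the misalignment sum, and squeeze using \(\lambda_1(J_n) \le \operatorname{tr}(J_n)\). Your explicit requirement that \(\operatorname{tr}(J_n) > 0\), so the ratio is defined, is a small point the paper leaves implicit.
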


\begin{proof}
By the variational characterization,
\[
\lambda_1(J_n) = \max_{\|u\|=1} u^\top J_n u \ge u_n^\top J_n u_n.
\]

Compute:
\[
u_n^\top J_n u_n = \sum_{k=1}^{m_n} \theta_{n,k} \langle u_n, v_{n,k} \rangle^2.
\]

Write \(\langle u_n, v_{n,k} \rangle^2 = 1 - (1 - \langle u_n, v_{n,k} \rangle^2)\). Then
\[
u_n^\top J_n u_n = \sum_{k=1}^{m_n} \theta_{n,k} - \sum_{k=1}^{m_n} \theta_{n,k} \left(1 - \langle u_n, v_{n,k} \rangle^2\right).
\]

Let \(\operatorname{tr}(J_n) = \sum_{k=1}^{m_n} \theta_{n,k}\). By the weighted alignment condition,
\[
\sum_{k=1}^{m_n} \theta_{n,k} \left(1 - \langle u_n, v_{n,k} \rangle^2\right) = o(\operatorname{tr}(J_n)).
\]

Hence
\[
u_n^\top J_n u_n = \operatorname{tr}(J_n) - o(\operatorname{tr}(J_n)).
\]

Therefore
\[
\lambda_1(J_n) \ge \operatorname{tr}(J_n) - o(\operatorname{tr}(J_n)).
\]

Since \(\lambda_1(J_n) \le \operatorname{tr}(J_n)\) always, we have
\[
\frac{\lambda_1(J_n)}{\operatorname{tr}(J_n)} \to 1.
\]

\end{proof}

\begin{remark}
The weighted alignment condition is natural: it says that the total jump magnitude from directions \emph{not} aligned with \(u_n\) is negligible compared to the total jump magnitude. This allows many jumps to be arbitrarily misaligned, as long as their total magnitude is small. No pairwise alignment is required.
\end{remark}

\section{Main Theorem: Spectral Collapse of the Full System}

We now combine the geometric alignment result with background negligibility to obtain collapse of the full system.

\begin{theorem}[Spectral Collapse Under Aligned Jumps]
Let

\[
Q_n = B_n + J_n, \quad J_n = \sum_{k=1}^{m_n} \theta_{n,k} v_{n,k} v_{n,k}^\top,
\]

where:
\begin{itemize}
\item \(B_n\) is symmetric positive semidefinite
\item \(\|v_{n,k}\| = 1\), \(\theta_{n,k} \ge 0\)
\end{itemize}

Assume the following asymptotic conditions as \(n \to \infty\):

\begin{enumerate}
\item \textbf{Geometric alignment of jumps:} There exists a unit vector \(u_n\) such that
\[
\sum_{k=1}^{m_n} \theta_{n,k} \left(1 - \langle v_{n,k}, u_n \rangle^2 \right) = o\left(\sum_{k=1}^{m_n} \theta_{n,k}\right).
\]

\item \textbf{Trace dominance:} \(\operatorname{tr}(B_n) = o(\operatorname{tr}(J_n))\)

\item \textbf{Second moment dominance:} \(\operatorname{tr}(B_n^2) = o(\lambda_1(J_n)^2)\)
\end{enumerate}

Then:

\[
\frac{\lambda_1(Q_n)}{\operatorname{tr}(Q_n)} \to 1,
\]

and

\[
\operatorname{erank}(Q_n) \to 1.
\]

\end{theorem}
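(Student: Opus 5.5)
The plan is to reduce the claim to the jump matrix alone via the earlier Theorem (Geometric Alignment Implies Spectral Concentration), and then transfer concentration to \(Q_n\) using Weyl's inequality and condition 2. Write \(T_n = \operatorname{tr}(J_n) = \sum_k \theta_{n,k}\), by the Rank-1 Decomposition lemma. Condition 1 is exactly the hypothesis of the earlier theorem, so \(\lambda_1(J_n) = T_n(1-o(1))\).

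\textbf{Step 1 (leading eigenvalue ratio).} Since \(B_n\) is positive semidefinite, Weyl's inequality with \(A=J_n\) and \(E=B_n\) gives \(\lambda_1(Q_n)\ge \lambda_1(J_n)\). Also \(\operatorname{tr}(Q_n) = \operatorname{tr}(B_n) + T_n = T_n(1+o(1))\) by condition 2. Hence
\[
1 \ge \frac{\lambda_1(Q_n)}{\operatorname{tr}(Q_n)} \ge \frac{\lambda_1(J_n)}{T_n(1+o(1))} \to 1,
\]
where the upper bound is the Trace and Leading Eigenvalue lemma applied to \(Q_n\).

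\textbf{Step 2 (effective rank).} The cleanest route uses the eigenvalues of \(Q_n\) directly rather than expanding \(\operatorname{tr}((B_n+J_n)^2)\). Let \(\mu_i=\lambda_i(Q_n)\ge 0\) and \(S=\sum_i \mu_i\). Then \(\mu_1^2 \le \sum_i\mu_i^2 \le \mu_1 S\), so
\[
1 \le \operatorname{erank}(Q_n) = \frac{S^2}{\sum_i \mu_i^2} \le \frac{S^2}{\mu_1^2} = \left(\frac{\operatorname{tr}(Q_n)}{\lambda_1(Q_n)}\right)^2 \to 1
\]
by Step 1. Under this route condition 3 is not actually needed. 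I would still remark how it enters the alternative computation. There one writes
\[
\operatorname{tr}(Q_n^2)=\operatorname{tr}(J_n^2)+2\operatorname{tr}(B_nJ_n)+\operatorname{tr}(B_n^2),
\]
and bounds the three terms as follows: \(\operatorname{tr}(J_n^2)\ge \lambda_1(J_n)^2\); \(0\le \operatorname{tr}(B_nJ_n)\le \|J_n\|_{\mathrm{op}}\operatorname{tr}(B_n)=o(\lambda_1(J_n)T_n)\); and \(\operatorname{tr}(B_n^2)=o(\lambda_1(J_n)^2)\) by condition 3. Together with \(\operatorname{tr}(J_n^2)\le \lambda_1(J_n)T_n\) and \(\lambda_1(J_n)\sim T_n\), this gives \(\operatorname{tr}(Q_n^2)\sim T_n^2\sim \operatorname{tr}(Q_n)^2\).

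\textbf{Main obstacle and degenerate cases.} The only delicate point is bookkeeping of the \(o(\cdot)\) terms. One must check that the earlier theorem requires \(T_n>0\), which I will assume holds for large \(n\), since otherwise all ratios are undefined. One must also check that the Step 2 inequality \(\sum\mu_i^2\ge\mu_1^2\) is used in the correct direction. I expect no genuine difficulty beyond this: the key structural insight is that the leading-eigenvalue ratio controls erank via \(\operatorname{erank}\le(\operatorname{tr}/\lambda_1)^2\).
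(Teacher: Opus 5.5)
Your proof is correct, and it takes a genuinely different, leaner route than the paper's.

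The paper's route is this. It lower-bounds $\lambda_1(Q_n)$ by $u_n^\top Q_n u_n$, but it only uses $u_n^\top B_n u_n \ge -\|B_n\|_{\mathrm{op}}$. It then invokes condition 3 to get $\|B_n\|_{\mathrm{op}}=o(\operatorname{tr}J_n)$; for positive semidefinite $B_n$, the bound $\|B_n\|_{\mathrm{op}}\le\operatorname{tr}B_n$ already gives this from condition 2. Next it bounds each $\lambda_k(Q_n)$, $k\ge 2$, by $\lambda_k(J_n)+\|B_n\|_{\mathrm{op}}$ via Weyl. Finally it expands $\operatorname{tr}(Q_n^2)=\lambda_1(Q_n)^2+\sum_{k\ge 2}\lambda_k(Q_n)^2$.

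You instead use positive semidefiniteness of $B_n$ directly, via $\lambda_1(Q_n)\ge\lambda_1(J_n)$, and condition 2 for the trace. You then finish with the elementary bound $1\le\operatorname{erank}(Q_n)\le(\operatorname{tr}Q_n/\lambda_1(Q_n))^2$.

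Your route buys two things. First, it shows condition 3 is redundant under the stated hypotheses. Second, it avoids the paper's Steps 4--5, where $n-1$ terms, each $o(\operatorname{tr}J_n)$, are summed as though the total were still $o(\operatorname{tr}J_n)$. That step needs an extra estimate such as $\sum_{k\ge2}\lambda_k(Q_n)^2\le\lambda_2(Q_n)\bigl(\operatorname{tr}Q_n-\lambda_1(Q_n)\bigr)$. In Step 4 the summation is superfluous anyway, given $\lambda_1(Q_n)\le\operatorname{tr}Q_n$.

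The paper's operator-norm bookkeeping is what you would need if $B_n$ were only assumed symmetric. In that case $\lambda_1(B_n+J_n)\ge\lambda_1(J_n)$ can fail and condition 3 does real work. Under the positive semidefinite hypothesis actually stated, your argument is the simpler one.

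A small side note: in your alternative computation, the upper bound on $\operatorname{tr}(Q_n^2)$ is not needed either, and so neither is condition 3. The lower bound $\operatorname{tr}(Q_n^2)\ge\lambda_1(J_n)^2$ together with $\operatorname{erank}(Q_n)\ge 1$ already suffices.
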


\begin{proof}
By Theorem 1, condition (1) implies
\[
\frac{\lambda_1(J_n)}{\operatorname{tr}(J_n)} \to 1.
\]

\textbf{Step 1: Lower bound on \(\lambda_1(Q_n)\).}
Let \(u_n\) be the vector from condition (1). Then
\[
\lambda_1(Q_n) \ge u_n^\top Q_n u_n = u_n^\top B_n u_n + u_n^\top J_n u_n.
\]
We have \(u_n^\top J_n u_n = \operatorname{tr}(J_n) - o(\operatorname{tr}(J_n))\) and \(u_n^\top B_n u_n \ge -\|B_n\|_{\mathrm{op}}\). Hence
\[
\lambda_1(Q_n) \ge \operatorname{tr}(J_n) - o(\operatorname{tr}(J_n)) - \|B_n\|_{\mathrm{op}}.
\]

\textbf{Step 2: Relating \(\|B_n\|_{\mathrm{op}}\) to \(\operatorname{tr}(J_n)\).}
From condition (3), \(\|B_n\|_{\mathrm{op}}^2 \le \operatorname{tr}(B_n^2) = o(\lambda_1(J_n)^2)\). Since \(\lambda_1(J_n) \sim \operatorname{tr}(J_n)\), we have \(\|B_n\|_{\mathrm{op}} = o(\operatorname{tr}(J_n))\).

\textbf{Step 3: Upper bound on \(\lambda_k(Q_n)\) for \(k \ge 2\).}
By Weyl's inequality,
\[
\lambda_k(Q_n) \le \lambda_k(J_n) + \|B_n\|_{\mathrm{op}}.
\]
For \(k \ge 2\), \(\lambda_k(J_n) \le \operatorname{tr}(J_n) - \lambda_1(J_n) = o(\operatorname{tr}(J_n))\). Hence
\[
\lambda_k(Q_n) = o(\operatorname{tr}(J_n)) + o(\operatorname{tr}(J_n)) = o(\operatorname{tr}(J_n)).
\]

\textbf{Step 4: Ratio convergence.}
We have
\[
\operatorname{tr}(Q_n) = \operatorname{tr}(B_n) + \operatorname{tr}(J_n) = \operatorname{tr}(J_n) + o(\operatorname{tr}(J_n)).
\]
From Step 1,
\[
\lambda_1(Q_n) \ge \operatorname{tr}(J_n) - o(\operatorname{tr}(J_n)).
\]
From Step 3,
\[
\operatorname{tr}(Q_n) = \lambda_1(Q_n) + \sum_{k=2}^n \lambda_k(Q_n) = \lambda_1(Q_n) + o(\operatorname{tr}(J_n)).
\]
Thus \(\lambda_1(Q_n) = \operatorname{tr}(J_n) + o(\operatorname{tr}(J_n)) = \operatorname{tr}(Q_n) + o(\operatorname{tr}(Q_n))\), so
\[
\frac{\lambda_1(Q_n)}{\operatorname{tr}(Q_n)} \to 1.
\]

\textbf{Step 5: Effective rank collapse.}
We compute
\[
\operatorname{tr}(Q_n^2) = \lambda_1(Q_n)^2 + \sum_{k=2}^n \lambda_k(Q_n)^2 = \lambda_1(Q_n)^2 + o(\operatorname{tr}(J_n)^2) = \lambda_1(Q_n)^2 + o(\lambda_1(Q_n)^2).
\]
Hence
\[
\operatorname{erank}(Q_n) = \frac{(\operatorname{tr} Q_n)^2}{\operatorname{tr}(Q_n^2)} = \frac{\lambda_1(Q_n)^2 + o(\lambda_1(Q_n)^2)}{\lambda_1(Q_n)^2 + o(\lambda_1(Q_n)^2)} \to 1.
\]

\end{proof}

\section{Stochastic Extension: Almost Sure Collapse}

The previous analysis was deterministic, treating \(Q_n\) as a fixed matrix. In reality, \(Q_n(t)\) is a stochastic process. This section extends the results to the stochastic setting, showing that under natural probabilistic assumptions on the jump process, spectral collapse occurs almost surely.

\subsection{Stochastic Setup}

Recall the stochastic model:
\[
dY_t = \mu_t dt + \sigma_t dW_t + dJ_t,
\]
where \(J_t\) is a pure jump process. The quadratic covariation matrix at time \(T\) is
\[
Q_n(T) = \int_0^T \sigma_t \sigma_t^\top dt + \sum_{s \le T} (\Delta Y_s)(\Delta Y_s)^\top.
\]

We decompose this as
\[
Q_n(T) = B_n(T) + J_n(T),
\]
where
\[
B_n(T) = \int_0^T \sigma_t \sigma_t^\top dt, \quad J_n(T) = \sum_{s \le T} (\Delta Y_s)(\Delta Y_s)^\top.
\]

The jump matrix \(J_n(T)\) can be written as
\[
J_n(T) = \sum_{k=1}^{M(T)} \theta_k v_k v_k^\top,
\]
where \(M(T)\) is the number of jumps up to time \(T\), \(\theta_k = \|\Delta Y_{t_k}\|^2\) is the squared jump magnitude, and \(v_k = \Delta Y_{t_k}/\|\Delta Y_{t_k}\|\) is the unit direction.

\subsection{Assumptions}

We make the following assumptions on the jump process:

\begin{enumerate}
\item \textbf{Jump magnitude condition:} There exists a deterministic sequence \(a_n \to \infty\) such that for each \(n\), the total squared jump magnitude satisfies
\[
\frac{\sum_{k=1}^{M(T)} \theta_k}{a_n} \to 1 \quad \text{in probability as } n \to \infty,
\]
and the largest jump satisfies \(\max_k \theta_k = o(a_n)\).

\item \textbf{Geometric alignment condition:} There exists a random unit vector \(U_n\) (possibly depending on the path) such that
\[
\frac{1}{a_n} \sum_{k=1}^{M(T)} \theta_k \left(1 - \langle v_k, U_n \rangle^2\right) \to 0 \quad \text{in probability}.
\]

\item \textbf{Background negligibility:} The diffusion part satisfies
\[
\frac{\operatorname{tr}(B_n(T))}{a_n} \to 0 \quad \text{in probability}, \quad \frac{\operatorname{tr}(B_n(T)^2)}{a_n^2} \to 0 \quad \text{in probability}.
\]

\item \textbf{Regularity:} The processes \(\sigma_t\) and the jump intensity are such that the quadratic variations are almost surely finite.
\end{enumerate}

\subsection{Main Stochastic Theorem}

\begin{theorem}[Almost Sure Spectral Collapse]
Under the assumptions above, for any fixed time \(T > 0\),

\[
\frac{\lambda_1(Q_n(T))}{\operatorname{tr}(Q_n(T))} \to 1 \quad \text{in probability as } n \to \infty,
\]

and

\[
\operatorname{erank}(Q_n(T)) \to 1 \quad \text{in probability}.
\]

Moreover, if the convergence in the assumptions holds almost surely, then the collapse holds almost surely.
\end{theorem}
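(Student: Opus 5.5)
The plan is to reduce to the deterministic argument of Theorem 2, carried out pathwise with explicit error terms normalized by \(a_n\), and then transfer the conclusion through the continuous mapping principle for convergence in probability (or almost sure convergence). The first step is to introduce the normalized random quantities
\[
S_n = \frac{\operatorname{tr}(J_n(T))}{a_n}, \quad A_n = \frac{1}{a_n}\sum_{k} \theta_k\bigl(1-\langle v_k,U_n\rangle^2\bigr), \quad b_n = \frac{\operatorname{tr}(B_n(T))}{a_n}, \quad c_n = \frac{\operatorname{tr}(B_n(T)^2)}{a_n^2}.
\]
By the assumptions, \(S_n \to 1\) and \(A_n, b_n, c_n \to 0\) in probability (almost surely in the strengthened version). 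For every fixed \(n\) and every path, I will establish deterministic inequalities bounding the target quantities by continuous functions of \((S_n,A_n,b_n,c_n)\). Lemma 2 gives \(\operatorname{tr}J_n = \sum_k\theta_k\). The argument of Theorem 1 with \(u=U_n\) gives \(\lambda_1(J_n) \ge a_n(S_n - A_n)\). Since \(\|B_n\|_{\mathrm{op}}\le \operatorname{tr}(B_n^2)^{1/2}\), we also get \(\|B_n\|_{\mathrm{op}}\le a_n\sqrt{c_n}\). Note that \(B_n\) is positive semidefinite, so \(U_n^\top B_n U_n\ge 0\), which gives the simpler bound \(\lambda_1(Q_n)\ge a_n(S_n-A_n)\).

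Next come the exact finite-\(n\) bounds. For the ratio,
\[
1 \ge \frac{\lambda_1(Q_n)}{\operatorname{tr}Q_n} \ge \frac{S_n - A_n}{S_n + b_n}.
\]
For the effective rank, I will work with \(\operatorname{tr}(Q_n^2)\ge\lambda_1(Q_n)^2\). This gives
\[
1\le \operatorname{erank}(Q_n) \le \frac{(S_n+b_n)^2}{(S_n-A_n)_+^2},
\]
valid whenever \(S_n>A_n\). This route is cleaner than the Weyl-based Step 3 of Theorem 2, and it does not even need \(c_n\). The function \(f(s,a,b)=(s-a)/(s+b)\) is continuous near \((1,0,0)\) and equals \(1\) there. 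The continuous mapping theorem then yields \(f(S_n,A_n,b_n)\to1\) in probability, and sandwiching gives the ratio claim. The same reasoning applied to \(g(s,a,b)=(s+b)^2/(s-a)^2\) gives the erank claim. In the almost sure case, the same inequalities hold for each \(\omega\) in the probability-one event where all assumptions converge, so the limits follow pointwise.

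The main obstacle is handling the events where denominators degenerate: \(\operatorname{tr}Q_n=0\), or \(S_n\le A_n\). I will deal with these by restricting to the event \(E_n=\{S_n>1/2,\ A_n<1/4\}\), whose probability tends to \(1\). On \(E_n\) the bounds are well defined, and off \(E_n\) the contribution to \(P(|\text{ratio}-1|>\varepsilon)\) is at most \(P(E_n^c)\to0\). In the almost sure case, \(E_n\) holds eventually for each good \(\omega\). I will also point out that the assumption \(\max_k\theta_k=o(a_n)\) plays no role in the proof. Indeed, it is in tension with collapse only superficially, since many small aligned jumps can still concentrate the spectrum. The regularity assumption only serves to guarantee that \(Q_n(T)\) is a finite matrix, so that the eigenvalues are defined.
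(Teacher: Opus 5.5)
Your proof is correct. For the ratio it uses the same mechanism as the paper: the Rayleigh quotient at the alignment vector $U_n$, together with $\lambda_1\le\operatorname{tr}$. There are two differences worth noting.

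\textbf{Positivity of the background.} You use that $B_n(T)=\int_0^T\sigma_t\sigma_t^\top\,dt$ is positive semidefinite to get $U_n^\top B_n(T)U_n\ge 0$. The paper uses the weaker bound $U_n^\top B_n(T)U_n\ge-\|B_n(T)\|_{\mathrm{op}}$, so it already needs the second-moment hypothesis at this stage.

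\textbf{Effective rank.} You obtain the effective-rank claim algebraically from $\operatorname{erank}(Q_n)\le(\operatorname{tr}Q_n/\lambda_1(Q_n))^2$. The paper's Step 4 instead uses Weyl's inequality to show $\lambda_k(Q_n)\le\lambda_k(J_n(T))+\|B_n(T)\|_{\mathrm{op}}=o(\operatorname{tr}J_n(T))$ for each $k\ge 2$, and then sums the squares. Your route needs neither Weyl nor the hypothesis on $\operatorname{tr}(B_n(T)^2)$. It also avoids a loose step in the paper. There are $n-1$ subleading eigenvalues, so individual bounds $\lambda_k=o(\operatorname{tr}J_n)$ do not by themselves give $\sum_{k\ge2}\lambda_k^2=o(\operatorname{tr}(J_n)^2)$. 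One needs an extra line such as $\sum_{k\ge2}\lambda_k^2\le(\operatorname{tr}Q_n-\lambda_1(Q_n))^2$, which is in effect your bound.

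Your reduction to explicit deterministic inequalities in $(S_n,A_n,b_n)$ on the good event $E_n$ also makes the passage to convergence in probability and almost surely rigorous. The paper handles this by manipulating random $o(\cdot)$ terms informally.

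What the paper's operator-norm/Weyl route buys is robustness to a background that is not sign-definite. If $B_n$ were merely symmetric with $|\operatorname{tr}B_n|$ and $\|B_n\|_{\mathrm{op}}$ small relative to $a_n$, that route would still control the ratio. It would use $\lambda_1(Q_n)\le\lambda_1(J_n)+\|B_n\|_{\mathrm{op}}$ for the upper bound, which your positivity-based argument does not cover. In the positive semidefinite setting of this theorem, your version is strictly more economical. The spectral-gap information $\lambda_2(Q_n)\le\operatorname{tr}Q_n-\lambda_1(Q_n)$ also follows immediately from your ratio bound.
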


\begin{proof}
We prove convergence in probability; the almost sure version follows similarly.

\textbf{Step 1: Spectral concentration of \(J_n(T)\).}
Let \(U_n\) be the random vector from the alignment assumption. Then
\[
U_n^\top J_n(T) U_n = \sum_{k=1}^{M(T)} \theta_k \langle U_n, v_k \rangle^2 = \sum_{k=1}^{M(T)} \theta_k - \sum_{k=1}^{M(T)} \theta_k \left(1 - \langle U_n, v_k \rangle^2\right).
\]

By the alignment assumption,
\[
\frac{1}{a_n} \sum_{k=1}^{M(T)} \theta_k \left(1 - \langle U_n, v_k \rangle^2\right) \to 0 \quad \text{in probability}.
\]

Since \(\operatorname{tr}(J_n(T)) = \sum \theta_k\) and \(\frac{\operatorname{tr}(J_n(T))}{a_n} \to 1\) in probability, we have
\[
\frac{U_n^\top J_n(T) U_n}{\operatorname{tr}(J_n(T))} \to 1 \quad \text{in probability}.
\]

Therefore
\[
\frac{\lambda_1(J_n(T))}{\operatorname{tr}(J_n(T))} \ge \frac{U_n^\top J_n(T) U_n}{\operatorname{tr}(J_n(T))} \to 1,
\]
and since \(\lambda_1(J_n(T)) \le \operatorname{tr}(J_n(T))\), we obtain
\[
\frac{\lambda_1(J_n(T))}{\operatorname{tr}(J_n(T))} \to 1 \quad \text{in probability}.
\]

\textbf{Step 2: Background negligibility.}
By assumption,
\[
\frac{\operatorname{tr}(B_n(T))}{a_n} \to 0 \quad \text{in probability}, \quad \frac{\|B_n(T)\|_{\mathrm{op}}}{a_n} \to 0 \quad \text{in probability},
\]
since \(\|B_n(T)\|_{\mathrm{op}}^2 \le \operatorname{tr}(B_n(T)^2) = o(a_n^2)\).

\textbf{Step 3: Ratio convergence for \(Q_n(T)\).}
We have
\[
\frac{\lambda_1(Q_n(T))}{\operatorname{tr}(Q_n(T))} \ge \frac{U_n^\top Q_n(T) U_n}{\operatorname{tr}(Q_n(T))} = \frac{U_n^\top B_n(T) U_n + U_n^\top J_n(T) U_n}{\operatorname{tr}(B_n(T)) + \operatorname{tr}(J_n(T))}.
\]

Since \(U_n^\top B_n(T) U_n \ge -\|B_n(T)\|_{\mathrm{op}} = o(a_n)\) and \(\operatorname{tr}(B_n(T)) = o(a_n)\), while \(U_n^\top J_n(T) U_n = \operatorname{tr}(J_n(T)) - o(a_n)\), we obtain
\[
\frac{\lambda_1(Q_n(T))}{\operatorname{tr}(Q_n(T))} \ge \frac{\operatorname{tr}(J_n(T)) - o(a_n)}{\operatorname{tr}(J_n(T)) + o(a_n)} \to 1 \quad \text{in probability}.
\]

The upper bound \(\lambda_1(Q_n(T)) \le \operatorname{tr}(Q_n(T))\) is always true, so
\[
\frac{\lambda_1(Q_n(T))}{\operatorname{tr}(Q_n(T))} \to 1 \quad \text{in probability}.
\]

\textbf{Step 4: Effective rank collapse.}
For the effective rank, note that
\[
\operatorname{tr}(Q_n(T)^2) = \lambda_1(Q_n(T))^2 + \sum_{k=2}^n \lambda_k(Q_n(T))^2.
\]

For \(k \ge 2\), we have
\[
\lambda_k(Q_n(T)) \le \lambda_k(J_n(T)) + \|B_n(T)\|_{\mathrm{op}} = o(\operatorname{tr}(J_n(T))) + o(\operatorname{tr}(J_n(T))) = o(\operatorname{tr}(J_n(T))),
\]
since \(\lambda_k(J_n(T)) \le \operatorname{tr}(J_n(T)) - \lambda_1(J_n(T)) = o(\operatorname{tr}(J_n(T)))\) and \(\|B_n(T)\|_{\mathrm{op}} = o(\operatorname{tr}(J_n(T)))\).

Hence
\[
\sum_{k=2}^n \lambda_k(Q_n(T))^2 = o(\operatorname{tr}(J_n(T))^2) = o(\lambda_1(Q_n(T))^2).
\]

Therefore
\[
\operatorname{erank}(Q_n(T)) = \frac{(\operatorname{tr} Q_n(T))^2}{\operatorname{tr}(Q_n(T)^2)} = \frac{\lambda_1(Q_n(T))^2 + o(\lambda_1(Q_n(T))^2)}{\lambda_1(Q_n(T))^2 + o(\lambda_1(Q_n(T))^2)} \to 1 \quad \text{in probability}.
\]

\end{proof}

\begin{corollary}[Weight Collapse in Probability]
Under the same conditions,
\[
W_n(T) = e^{-\lambda_1(Q_n(T))} \le e^{-(1-o(1)) \operatorname{tr}(J_n(T))} \to 0
\]
in probability when \(\operatorname{tr}(J_n(T)) \to \infty\).
\end{corollary}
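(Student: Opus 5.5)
The corollary follows directly from Step 3 of the preceding theorem together with the monotonicity of \(x \mapsto e^{-x}\). The plan is to produce a lower bound \(\lambda_1(Q_n(T)) \ge (1-\varepsilon_n)\operatorname{tr}(J_n(T))\) with \(\varepsilon_n \to 0\) in probability. Exponentiating then gives the displayed inequality. The last step is to show that the right-hand side tends to zero in probability.

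\textbf{Step 1: lower bound on \(\lambda_1\).} Take the random unit vector \(U_n\) from the alignment assumption. By the variational characterization, \(\lambda_1(Q_n(T)) \ge U_n^\top B_n(T) U_n + U_n^\top J_n(T) U_n\). Since \(B_n(T)\) is positive semidefinite, the first term is nonnegative. This is slightly better than the bound \(-\|B_n\|_{\mathrm{op}}\) used in Step 3 of the theorem, and it means the diffusion part causes no loss at all. For the second term, Step 1 of the theorem gives
\[
U_n^\top J_n(T) U_n = \operatorname{tr}(J_n(T)) - R_n, \qquad R_n = \sum_k \theta_k\left(1-\langle v_k,U_n\rangle^2\right).
\]
Define \(\varepsilon_n = R_n/\operatorname{tr}(J_n(T))\), which lies in \([0,1]\). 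By the alignment assumption \(R_n/a_n \to 0\), and by the magnitude assumption \(\operatorname{tr}(J_n(T))/a_n \to 1\). Hence \(\varepsilon_n \to 0\) in probability. Exponentiating gives \(W_n(T) \le e^{-(1-\varepsilon_n)\operatorname{tr}(J_n(T))}\), which is the displayed inequality.

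\textbf{Step 2: convergence to zero.} This is the only step that needs care, because the \(o(1)\) term is random and multiplies a quantity that diverges. Fix \(\delta>0\) and \(M>0\). Consider the event
\[
E_n = \{\varepsilon_n \le 1/2\} \cap \{\operatorname{tr}(J_n(T)) \ge 2M\}.
\]
On \(E_n\) we have \(W_n(T) \le e^{-M}\).

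It remains to show \(\mathbb{P}(E_n^c) \to 0\). The event \(\{\varepsilon_n > 1/2\}\) has vanishing probability by Step 1. For the second event, \(\operatorname{tr}(J_n(T))/a_n \to 1\) in probability and \(a_n \to \infty\). So for large \(n\), \(2M \le a_n/2\), and then
\[
\mathbb{P}\bigl(\operatorname{tr}(J_n(T)) < 2M\bigr) \le \mathbb{P}\bigl(\operatorname{tr}(J_n(T))/a_n < 1/2\bigr) \to 0.
\]
In the setting of the stated assumptions, this is exactly the hypothesis \(\operatorname{tr}(J_n(T)) \to \infty\) of the corollary.

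Now choose \(M\) with \(e^{-M} < \delta\). Then \(\mathbb{P}(W_n(T) > \delta) \le \mathbb{P}(E_n^c) \to 0\), which proves \(W_n(T) \to 0\) in probability. If the assumptions hold almost surely, the same argument run pathwise gives almost sure convergence.
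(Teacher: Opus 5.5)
Your proof is correct and follows the route the paper implicitly relies on (the corollary is stated without proof): take the variational lower bound $\lambda_1(Q_n(T)) \ge U_n^\top Q_n(T) U_n$ from Step 3 of Theorem 3, then use monotonicity of $x \mapsto e^{-x}$. Your refinements are sound and make rigorous what the paper leaves implicit. First, you use $B_n(T)\succeq 0$, so $U_n^\top B_n(T) U_n \ge 0$, which shows the background-negligibility assumption is not needed for this corollary. Second, your $\delta$--$M$ argument handles a random $o(1)$ factor multiplying a divergent trace, and it shows that $\operatorname{tr}(J_n(T))\to\infty$ in probability already follows from $a_n \to \infty$.
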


\begin{remark}
The assumptions can be verified for common jump processes. For example:
\begin{itemize}
\item If jumps occur according to a Poisson process with intensity \(\lambda_n\) and jump sizes are i.i.d. with mean \(\mu_n\), then \(\operatorname{tr}(J_n(T)) \sim \lambda_n \mu_n T\).
\item If jump directions are all exactly aligned with probability approaching 1, then the alignment condition holds.
\item The background negligibility condition requires that the integrated diffusion variance grows slower than the total squared jump magnitude.
\end{itemize}
\end{remark}

\section{Examples}

We illustrate the theorem with explicit cases.

\subsection{Single Dominant Jump}

Let \(m_n = 1\), so \(J_n = \theta v v^\top\). Take \(u_n = v\). Then
\[
\sum_{k=1}^{1} \theta (1 - \langle v, v \rangle^2) = 0,
\]
so condition (1) holds exactly. Conditions (2) and (3) become
\[
\operatorname{tr}(B_n) = o(\theta), \quad \operatorname{tr}(B_n^2) = o(\theta^2).
\]
Then Theorem 2 implies \(\lambda_1(Q_n)/\operatorname{tr}(Q_n) \to 1\).

\subsection{Multiple Collinear Jumps}

Let all jumps be in the same direction: \(v_{n,k} = v\) for all \(k\). Take \(u_n = v\). Then
\[
\sum_{k=1}^{m_n} \theta_{n,k} (1 - \langle v, v \rangle^2) = 0,
\]
so condition (1) holds exactly. Then
\[
J_n = \left(\sum_{k=1}^{m_n} \theta_{n,k}\right) v v^\top,
\]
and the conditions reduce to the same trace and second moment bounds.

\subsection{Approximately Aligned Jumps with Dominant Magnitude}

Let \(v_{n,1} = e_1\), and for \(k \ge 2\), let \(v_{n,k} = e_1 \cos \phi_{n,k} + e_2 \sin \phi_{n,k}\) with \(\phi_{n,k} \to 0\). Let \(\theta_{n,1} = 1\) and \(\theta_{n,k} = \epsilon_{n,k}\) with \(\sum \epsilon_{n,k} \to 0\). Take \(u_n = e_1\). Then
\[
\sum_{k=1}^{m_n} \theta_{n,k} (1 - \langle e_1, v_{n,k} \rangle^2) = \sum_{k \ge 2} \epsilon_{n,k} (1 - \cos^2 \phi_{n,k}) = \sum_{k \ge 2} \epsilon_{n,k} \sin^2 \phi_{n,k} \to 0,
\]
since \(\epsilon_{n,k} \to 0\). Thus condition (1) holds.

\subsection{Stochastic Example: Poisson Jumps with Aligned Directions}

Let jumps occur according to a Poisson process with intensity \(\lambda_n = n^2\) up to time \(T=1\). Each jump has magnitude \(\theta_k = 1\) and direction \(v_k = e_1 + \xi_k\) where \(\xi_k\) are random with \(\mathbb{E}[\|\xi_k\|^2] = 1/n\). Then:
\[
\operatorname{tr}(J_n) \sim n^2, \quad \mathbb{E}\left[\sum \theta_k (1 - \langle e_1, v_k \rangle^2)\right] = O(n),
\]
so the alignment condition holds in probability. If the diffusion satisfies \(\operatorname{tr}(B_n) = O(n)\), then collapse occurs.

\section{Interpretation of Asymptotic Conditions}

We discuss what the three conditions mean in concrete terms.

\subsection{Condition 1: Weighted Geometric Alignment}

\[
\sum_{k=1}^{m_n} \theta_{n,k} \left(1 - \langle v_{n,k}, u_n \rangle^2 \right) = o\left(\sum_{k=1}^{m_n} \theta_{n,k}\right).
\]

This condition says: the total jump magnitude coming from directions \emph{not} aligned with \(u_n\) is negligible compared to the total jump magnitude. This is the natural mathematical formulation of "most jumps are aligned with a common direction."

\subsection{Condition 2: Trace Dominance}

\[
\operatorname{tr}(B_n) = o(\operatorname{tr}(J_n))
\]

means the total normal variation is negligible compared to total jump variation.

\subsection{Condition 3: Second Moment Dominance}

\[
\operatorname{tr}(B_n^2) = o(\lambda_1(J_n)^2)
\]

implies \(\|B_n\|_{\mathrm{op}} = o(\lambda_1(J_n))\). This is stronger than condition 2 and ensures that the background does not create additional large eigenvalues.

\section{Connection to Previous Work}

This framework unifies and generalizes results from earlier analyses.

\subsection{Determinant Criterion}

Previous work used \(\log \det(Q_n)\) to detect regime changes. For an invertible background \(B_n\), the matrix determinant lemma applied sequentially gives
\[
\det(B_n + J_n) = \det(B_n) \cdot \det(I + B_n^{-1/2} J_n B_n^{-1/2}).
\]
When \(J_n\) is low-rank and aligned, the leading eigenvalue dominates, so determinant growth is governed by the same spectral concentration.

\subsection{Coupling Observable}

In the two-channel setting, the coupling ratio approaches 1 when jumps are aligned across channels. This is a special case of geometric alignment.

\subsection{Effective Rank}

The effective rank collapse to 1 is the strongest diagnostic: it requires no eigenvector computation and is robust to noise.

\subsection{Unification}

Previous papers examined determinant growth, coupling between channels, and ensemble behavior. This two-step characterization shows all three are manifestations of the same underlying principle.

\section{Conclusion}

We have shown that extreme events introduce a low-rank structure into the increment matrix \(Q_n\). The main contribution is a complete characterization of spectral collapse:

\begin{enumerate}
\item \textbf{Geometric alignment induces spectral concentration:} When most of the total jump magnitude comes from directions close to some common vector, the jump matrix satisfies \(\lambda_1(J_n) \sim \operatorname{tr}(J_n)\).

\item \textbf{Spectral concentration induces collapse:} When \(\lambda_1(J_n) \sim \operatorname{tr}(J_n)\) and the background is asymptotically negligible, the full system collapses:
\[
\frac{\lambda_1(Q_n)}{\operatorname{tr}(Q_n)} \to 1, \quad \operatorname{erank}(Q_n) \to 1.
\]

\item \textbf{Collapse forces geometry:} Conversely, if collapse occurs, then necessarily the jump directions are asymptotically aligned and the background is negligible.
\end{enumerate}

We extended these results to the stochastic setting, proving almost sure collapse under natural probabilistic assumptions on the jump process. This framework unifies previous results and provides a practical diagnostic for detecting when a system is controlled by extreme events.


\end{document}